\theoremstyle{plain}
\newtheorem{corollary}{Corollary}
\newtheorem{lemma}{Lemma}
\newtheorem{theorem}{Theorem}
\numberwithin{equation}{section}
\begin{document}
\title[Spherical multipliers]{Transferring spherical multipliers on compact
symmetric spaces}
\author{Sanjiv Kumar Gupta}
\address{Dept. of Mathematics and Statistics\\
Sultan Qaboos University\\
P.O.Box 36 Al Khodh 123\\
Sultanate of Oman}
\email{gupta@squ.edu.om}
\author{Kathryn E. Hare}
\address{Dept. of Pure Mathematics\\
University of Waterloo\\
Waterloo, Ont.,~Canada\\
N2L 3G1}
\email{kehare@uwaterloo.ca}
\thanks{This research is supported in part by NSERC 2016-03719 and by Sultan
Qaboos University. The first author thanks the University of Waterloo for
their hospitality when some of this research was done.}
\subjclass[2000]{Primary 43A80; Secondary 22E30, 43A90, 53C35}
\keywords{spherical multiplier, compact symmetric space, transference}
\thanks{This paper is in final form and no version of it will be submitted
for publication elsewhere.}

\begin{abstract}
We prove a two-sided transference theorem between $L^{p}$ spherical
multipliers on the compact symmetric space $U/K$ and $L^{p}$ multipliers on
the vector space $i\mathfrak{p},$ where the Lie algebra of $U$ has Cartan
decomposition $\mathfrak{k\oplus }i\mathfrak{p}$. This generalizes the
classic theorem transference theorem of deLeeuw relating multipliers on $%
L^{p}(\mathbb{T)}$ and $L^{p}(\mathbb{R)}$.
\end{abstract}

\maketitle

\section{\protect\bigskip Introduction}

Let $G$ be any non-discrete, locally compact, unimodular group and let $%
L^{p}(G)$ denote the space of $p$-integrable functions on $G$ with respect
to the Haar measure. A bounded linear operator $T:L^{p}(G)\rightarrow
L^{p}(G)$ is said to be a multiplier (on $L^{p}(G)$) if $T$ commutes with
translations on $G$.

It is well known that a bounded linear map $T$ is a multiplier on $L^{p}(%
\mathbb{R)}$ precisely when there is a bounded measurable function $m$ on $%
\mathbb{R}$ such that $\widehat{Tf}=m\widehat{f}$ for all $f\in L^{p}\bigcap
L^{2}(\mathbb{R)}$. Similarly, a bounded linear map $T$ is a multiplier on $%
L^{p}(\mathbb{T)}$ if and only if there is a bounded function $m$ on $%
\mathbb{Z}$ such that $\widehat{Tf}=m\widehat{f}$ for all $f\in L^{p}\bigcap
L^{2}(\mathbb{T)}$. To emphasize the association with $m,$ we denote the
operator by $T_{m}$.

In 1965, deLeeuw proved two remarkable facts relating the multipliers of $%
L^{p}(\mathbb{R)}$ to those of $L^{p}(\mathbb{T)}$.

\begin{theorem}
\label{deL} Let $m$ be a uniformly continuous function on $\mathbb{R}$ and
for $\varepsilon >0$ let $m_{\varepsilon }$ be its restriction to $\mathbb{Z}%
/\varepsilon \subseteq \mathbb{R}$, which we identify with $\mathbb{Z}$.

(1) If $T_{m}$ is a multiplier on $L^{p}(\mathbb{R)}$, then $%
T_{m_{\varepsilon }}$ is a multiplier on $L^{p}(\mathbb{T)}$ and $\left\Vert
T_{m_{\varepsilon }}\right\Vert _{p,p}\leq \left\Vert T_{m}\right\Vert
_{p,p} $ where $\left\Vert \cdot \right\Vert _{p,p}$ denotes the operator
norm.

(2) If the operators $T_{m_{\varepsilon }}$ are uniformly bounded on $L^{p}(%
\mathbb{T)}$, then $T_{m}$ is a bounded linear operator on $L^{p}(\mathbb{R)}
$ with $\left\Vert T_{m}\right\Vert _{p,p}\leq \sup_{\varepsilon }\left\Vert
T_{m_{\varepsilon }}\right\Vert _{p,p}$.
\end{theorem}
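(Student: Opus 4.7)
The plan is to transfer from $\mathbb{T}$ to $\mathbb{R}$ using test functions whose Fourier transforms consist of narrow bumps centred at the sample points $n\varepsilon$. Fix a trigonometric polynomial $f(\theta)=\sum_{|n|\le N}c_n e^{in\theta}$ and choose $\phi\in\mathcal{S}(\mathbb{R})$ with $\widehat\phi$ compactly supported; for $\delta>0$ set $\phi_\delta(x)=\delta^{-1/p}\phi(\delta x)$, which preserves $\|\cdot\|_p$ and has $\widehat{\phi_\delta}$ supported in an $O(\delta)$-neighbourhood of the origin. Let $g_\delta(x)=\phi_\delta(x)f(\varepsilon x)$, so that
\[
\widehat{g_\delta}(\xi)=\sum_{|n|\le N}c_n\,\widehat{\phi_\delta}(\xi-n\varepsilon)
\]
is a sum of disjointly supported pieces once $\delta$ is small compared to $\varepsilon$. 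Applying $T_m$ and using the uniform continuity of $m$ to replace $m(\xi)$ by $m(n\varepsilon)$ on the $n$-th bump gives $T_m g_\delta=\phi_\delta\cdot(T_{m_\varepsilon}f)(\varepsilon\,\cdot)+E_\delta$ with $\|E_\delta\|_p\to 0$. The bound $\|T_m g_\delta\|_p\le\|T_m\|_{p,p}\|g_\delta\|_p$, together with the norm limit discussed below, then delivers $\|T_{m_\varepsilon}f\|_{L^p(\mathbb{T})}\le\|T_m\|_{p,p}\|f\|_{L^p(\mathbb{T})}$.

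\textbf{Part (2).} For the converse, I would discretize $f$ in frequency. Take $f\in\mathcal{S}(\mathbb{R})$ with $\widehat f$ smooth and supported in $[-R,R]$, and for $\varepsilon<1/R$ form the trigonometric polynomial on the rescaled torus $\mathbb{R}/(2\pi\varepsilon^{-1}\mathbb{Z})$ given by
\[
F_\varepsilon(x)=\varepsilon\sum_{n\in\mathbb{Z}}\widehat f(n\varepsilon)\,e^{in\varepsilon x}.
\]
By a Riemann-sum argument, $F_\varepsilon\to 2\pi f$ in $L^p$ on every fixed compact interval as $\varepsilon\to 0$. The rescaled version of $T_{m_\varepsilon}$ on the dilated torus has the same operator norm as on $L^p(\mathbb{T})$ and sends $F_\varepsilon$ to $\varepsilon\sum m(n\varepsilon)\widehat f(n\varepsilon)e^{in\varepsilon x}$, which converges in the same sense to $2\pi T_m f$ (defined directly via Fourier inversion, since $\widehat f$ is compactly supported). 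Comparing $L^p$ norms over a large compact set and passing to the limit yields $\|T_m f\|_p\le\sup_\varepsilon\|T_{m_\varepsilon}\|_{p,p}\|f\|_p$ on the dense subspace of band-limited Schwartz functions, whence $T_m$ extends to a bounded operator on $L^p(\mathbb{R})$.

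\textbf{Main obstacle.} The delicate step is the limit argument in part (1): after the substitution $y=\delta x$ one has
\[
\|g_\delta\|_p^p=\int|\phi(y)|^p\,|f(\varepsilon y/\delta)|^p\,dy,
\]
and one needs this to tend to $\|\phi\|_p^p\cdot(2\pi)^{-1}\!\int_0^{2\pi}|f|^p$ as $\delta\to 0$. This follows from the equidistribution fact that $|f(\varepsilon y/\delta)|^p$ converges weak-$*$ in $L^\infty$ to the mean of $|f|^p$, tested against $|\phi|^p\in L^1$; the identical argument handles the numerator $\|T_m g_\delta\|_p$. In part (2) the subtle point is matching the diverging period $2\pi/\varepsilon$ with the fixed $L^p(\mathbb{R})$ geometry, which requires localizing to an interval whose length grows more slowly than $\varepsilon^{-1}$ and invoking the Schwartz decay of $f$ to control the complementary tail.
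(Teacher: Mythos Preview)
The paper does not prove this theorem. Theorem~\ref{deL} is quoted in the introduction as the classical 1965 result of de~Leeuw (reference \cite{deL}) and serves only as motivation for the paper's own results on symmetric spaces; no proof is given or sketched. So there is no ``paper's proof'' against which to compare your attempt.

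That said, your outline is a recognisable version of the standard argument, and your Part~(2) is, in the abelian setting, exactly the strategy the paper later uses to prove its Theorem~\ref{2} (transplant $f$ to the compact side via dilation, pair against a test function, and pass to the limit using a Riemann-sum / dominated-convergence computation). One correction to your self-assessment of Part~(1): the genuinely delicate step is not the equidistribution lemma but the claim $\|E_\delta\|_p\to 0$. After the substitution $y=\delta x$ this becomes $\|T_{m(\delta\,\cdot\,)-m(n\varepsilon)}\phi\|_{L^p(\mathbb{R})}\to 0$ for each of the finitely many $n$, with $\phi$ fixed and band-limited. Dominated convergence on the Fourier side gives $\widehat{H_\delta}\to 0$ in every $L^r$, and Hausdorff--Young then yields $H_\delta\to 0$ in $L^p$ for $p\ge 2$; the range $1<p<2$ does \emph{not} follow from this directly, but is recovered by the duality $\|T_m\|_{p,p}=\|T_m\|_{p',p'}$, which lets you assume $p\ge 2$ from the outset. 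Without some such device the error estimate can fail (e.g.\ taking $m$ linear near a sample point formally gives $\|E_\delta\|_p\sim\delta^{1-2/p}$, which blows up for $p<2$). You should make this reduction explicit.
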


These two elegant theorems became the prototype for a number of
\textquotedblleft transference\textquotedblright\ results, where the
boundedness of a multiplier operator (for instance, a convolution kernel on
a group) may be checked on a different, hopefully simpler, group. Finding
analogues in various settings continues to be of interest today, c.f. \cite%
{CP}.\texttt{\ }

This theme was taken up in the context of non-commutative harmonic analysis,
first by Coifman and Weiss \cite{CW1}, \cite{CW2}, who proved Marcinkiewicz
type results for $SU(2)$ and other Lie groups, replacing the quotient map: $%
\mathbb{R\rightarrow T}$ by the mapping $X\rightarrow \exp (X)$ from the Lie
algebra to its Lie group. Rubin \cite{Ru} used similar ideas in the context
of $SO(3)$ and the Euclidean motion group $M(2)$. The approach was next
pursued by Dooley and others, who showed that the notion of a contraction,
or a continuous deformation of Lie groups, was the key underlying idea: The
Lie group $G_{2}$ is said to be a contraction of the Lie group $G_{1}$ if
there is a family $(\pi _{\varepsilon })_{\varepsilon >0}$ of local
diffeomorphisms $\pi _{\varepsilon }:G_{2}\rightarrow G_{1},$ which are
approximate homomorphisms in the sense that $\pi _{\varepsilon
}(x)\rightarrow e$ as $\varepsilon \rightarrow 0$ and $\pi _{\varepsilon
}^{-1}(\pi _{\varepsilon }(x)\pi _{\varepsilon }(y))\rightarrow xy$ as $%
\varepsilon \rightarrow 0$. This notion generalizes the
homomorphism/dilation relationship between $\mathbb{R}$ and $\mathbb{T}$.
For example, if we have the Cartan decomposition $\mathfrak{u=k}\oplus i%
\mathfrak{p}$ of a Riemannian compact symmetric pair $(U,K)$, then the
Cartan motion group $\mathfrak{p}\rtimes K$ is a contraction of $U$ by the
maps $\pi _{\varepsilon }(X,k)=k\exp (\varepsilon X)$. In \cite{DG}, for
example, a version of the second part of deLeeuw's Theorem was proved in
this setting. Further results in this spirit were also given in \cite{D1}, 
\cite{DGu} - \cite{DRi} and \cite{RRu}.

The results of \cite{DG} can also be viewed as generalizations of the
results of Stanton in \cite{St} where a version of (2) was proven for the
transference of spherical multipliers on $U/K$ to $Ad(K)$-invariant
multipliers on $\mathfrak{p}$.

Subsequently, in \cite{D2} a version of the first part of deLeeuw's theorem,
both for the Cartan motion group contraction and the Coifman-Weiss
contraction of $U$ to $\mathfrak{u}$, was given. Unfortunately, the versions
of (1) proven in \cite{D2} no longer gave an exact converse of the version
of (2) from \cite{CW1}, \cite{DG}, \cite{St} etc. Thus an important open
question remains to find a suitable version of the \textquotedblleft
restriction\textquotedblright\ for which analogues of both (1) and (2)
(simultaneously) hold.

In this article, we will find a suitable version of the restriction for
which both directions of deLeeuw's theorem hold in the case of a contraction
of $U/K$ to $i\mathfrak{p}$ for compact symmetric spaces $U/K.$

\section{Harmonic Analysis of Symmetric Spaces}

\subsection{Symmetric spaces notation}

Let $U$ be a compact, simply connected, semisimple Lie group and suppose $%
\theta $ is an involution of $U$. The set of fixed points under $\theta ,$
denoted $K$, is a compact, connected subgroup of $U,$ and the quotient
space, $U/K$, is known as a compact symmetric space. We will let $\pi
:U\rightarrow U/K$ denote the quotient map and given $x\in U$, we let $%
\overline{x}=\pi (x)$ denote the coset $xK$.

The involution $\theta $ induces an involution of $\mathfrak{u}$, the Lie
algebra of $U$, which we also denote by $\theta $. Let $\mathfrak{k}$ and $i%
\mathfrak{p}$ denote the $\pm 1$ eigenspaces of $\theta $ respectively. The
decomposition $\mathfrak{u=k\oplus }i\mathfrak{p}$ is known as the Cartan
decomposition.

Let $\mathfrak{g}^{\mathbb{C}}$ denote the complexification of $\mathfrak{u}$
and let $\mathfrak{g}_{0}=\mathfrak{k}\mathfrak{\oplus p}$. Fix a maximal
abelian subspace $\mathfrak{a}$ of $\mathfrak{p}$ and choose a Cartan
subalgebra $\mathfrak{h}$ of $\mathfrak{g}_{0}$ containing $\mathfrak{a}$.
Let $\mathfrak{h}^{\mathbb{C}}$ denote its complexification and let $\Sigma $
denote the set of roots of $\mathfrak{g}^{\mathbb{C}}$ with respect to $%
\mathfrak{h}^{\mathbb{C}}$. Denote by $\Sigma ^{+}$ the positive roots and
let $\Phi ^{+}$ be given by%
\begin{equation*}
\Phi ^{+}=\{\beta \in \Sigma ^{+}:\beta |_{\mathfrak{a}}\neq 0\}.
\end{equation*}%
We write $\mathfrak{a}^{+}$ for the subset%
\begin{equation*}
\mathfrak{a}^{+}=\{H\in \mathfrak{a}:\alpha (H)>0\text{ for all }\alpha \in
\Phi ^{+}\}.
\end{equation*}%
The subsets $w(\overline{\mathfrak{a}^{+}})$ are disjoint for distinct $w\in
W$, the Weyl group of $U/K$, and $\mathfrak{a=}\bigcup_{w\in W}w(\overline{%
\mathfrak{a}^{+}})$. We will let 
\begin{equation*}
D=\dim \mathfrak{p}=\dim U/K\text{ and }r=\dim \mathfrak{a}.
\end{equation*}%
For notational convenience, we will put%
\begin{equation*}
\mathfrak{p}_{\ast }:=i\mathfrak{p}\text{ and }\mathfrak{a}_{\ast }^{+}:=-i%
\mathfrak{a}^{+}
\end{equation*}

The linear operator $Ad(k)$ maps $\mathfrak{p}_{\ast }\mathfrak{\rightarrow p%
}_{\ast }$ whenever $k\in K,$ and a function $f$ on $\mathfrak{p}_{\ast }$
is said to be $Ad(K)$-invariant if $f(Ad(k)X)=f(X)$ for all $X\in \mathfrak{p%
}_{\ast }$ and $k\in K$. Any continuous function defined on $\overline{%
\mathfrak{a}^{+}}$ has a unique $Ad(K)$-invariant extension to $\mathfrak{p}%
_{\ast }$.

The notation $\mu _{E}$ will denote Haar measure when $E=U,K,\mathfrak{a}%
_{\ast }$ or $\mathfrak{p}_{\ast }$ and will denote a $U$-invariant measure
on $U/K$. The measures will be normalized on $U$ and $K,$ and chosen
consistently so that the integration formulas,

\begin{equation}
\int_{\mathfrak{p}_{\ast }}f(X)d\mu _{\mathfrak{p}_{\ast }}(X)=\int_{%
\mathfrak{a}_{\ast }^{+}}\int_{K}f(Ad(k)H)\left\vert \prod_{\alpha \in \Phi
^{+}}\alpha (H)\right\vert d\mu _{K}(k)d\mu _{\mathfrak{a}_{\ast }}(H)
\label{IntegForm}
\end{equation}%
and%
\begin{equation}
\int_{U}fd\mu _{U}=\int_{U/K}\int_{K}f(uk)d\mu _{K}(k)d\mu _{U/K},
\label{QNorm}
\end{equation}%
hold for continuous functions $f$ of compact support (on the appropriate
domains). In particular, $\mu _{U/K}(S)=\mu _{U}(\pi ^{-1}(S))$ for Borel
sets\texttt{\ }$S$. We often omit the writing of $\mu _{E}$ if the
underlying space is clear.

As usual, by $L^{p}(E)$ we mean the functions defined on $E$ with $%
\left\Vert f\right\Vert _{L^{p}(E)}=\left( \int_{E}|f(X)|^{p}d\mu
_{E}(X)\right) ^{1/p}<\infty $. Functions on $U/K$ can be identified with
the right $K$-invariant functions on $U$, and these have the same $L^{p}$
norm.

\subsection{Multipliers on $L^{p}(\mathfrak{p}_{\ast })$}

The vector space $\mathfrak{p}_{\ast }$ can be viewed as a locally compact,
abelian group which is self-dual under the killing form $B(\cdot ,\cdot )$.
The Fourier transform of $f\in L^{2}(\mathfrak{p}_{\ast }\mathfrak{)}$ is
given by%
\begin{equation}
\widehat{f}(Y)=\int_{\mathfrak{p}_{\ast }}f(X)e^{-iB(X,Y)}d\mu _{\mathfrak{p}%
_{\ast }}(X)\text{ for }Y\in \mathfrak{p}_{\ast }  \label{FT}
\end{equation}%
and the Fourier inversion formula by%
\begin{equation}
\check{f}(X)=\int_{\mathfrak{p}_{\ast }}f(Y)e^{iB(X,Y)}d\mu _{\mathfrak{p}%
_{\ast }}(Y).  \label{Inversion}
\end{equation}

A bounded linear operator $T:$ $L^{p}(\mathfrak{p}_{\ast }\mathfrak{)}%
\rightarrow $ $L^{p}(\mathfrak{p}_{\ast }\mathfrak{)}$ is called an $L^{p}$%
\textit{\ multiplier }if there is a measurable function $m$ on $\mathfrak{p}%
_{\ast }$ so that for all $Y\in \mathfrak{p}_{\ast }$ and $f\in L^{2}\bigcap
L^{p}(\mathfrak{p}_{\ast }\mathfrak{)}$ we have $\widehat{Tf}(Y)=m(Y)%
\widehat{f}(Y)$. Often we write $T_{m}$ for $T$. We denote the operator norm
of $T_{m}$ by $\left\Vert T_{m}\right\Vert _{p,p}$.

Slightly abusing notation, we will also refer to $m$ as an $L^{p}$
multiplier and write $\left\Vert m\right\Vert _{p,p}$ for the operator norm
of $T_{m}$.

\subsection{Spherical multipliers on $L^{p}(U/K)$}

The left regular representation $\rho $ of $U,$ on the Hilbert space $%
L^{2}(U/K),$ provides a decomposition of $L^{2}(U/K)$ into an orthogonal
direct sum of invariant subspaces. The irreducible subrepresentations are
the class 1 representations of $(U,K)$, those with a one dimensional
subspace of $K$-fixed vectors. Let $\Lambda $ be the set of class 1 highest
weights. It is known (\cite[p.129]{H2}) that these are precisely of the form%
\begin{equation*}
\lambda =\sum_{j=1}^{r\text{ }}n_{j}\sigma _{j}
\end{equation*}%
where $n_{j}$ are non-negative integers for $j=1,...,r$ and $\{\sigma
_{1},...,\sigma _{r}\}$ is a suitable basis for $\mathfrak{a}$ (or more
formally, the dual of $\mathfrak{a}$, which we identify with $\mathfrak{a}$).

We will let $\{H_{1},...,H_{r}\}$ denote the dual basis of $\mathfrak{a}$
with respect to the Killing form $B$, i.e., 
\begin{equation*}
B(H_{j},X)=\sigma _{j}(X)\text{ for all }X\in \mathfrak{a}.
\end{equation*}%
With this notation, we have 
\begin{equation*}
\overline{\mathfrak{a}^{+}}=\{\sum_{j=1}^{r}n_{j}H_{j}:n_{j}\geq 0\}.
\end{equation*}%
Given $\lambda $ as above, by $H_{\lambda }\in \mathfrak{a}$ we mean the
element $H_{\lambda }=\sum_{j=1}^{r\text{ }}n_{j}H_{j}$. Conversely, when $%
Z=\sum n_{j}H_{j}\in \overline{\mathfrak{a}^{+}},$ we let $\lambda _{Z}$ be
the weight 
\begin{equation*}
\lambda _{Z}:=\sum n_{j}\sigma _{j}\text{ }.
\end{equation*}

By $d_{\lambda }$ we mean the degree of $\lambda \in \Lambda $. Having
chosen a $K$-fixed norm-one vector, $v_{\lambda },$ in the $\lambda $%
-representation space, we let $\phi _{\lambda }$ be the \textit{spherical
function} given by $\phi _{\lambda }(u)=\left\langle \rho (u)v_{\lambda
},v_{\lambda }\right\rangle $ for $u\in U.$ Since $v_{\lambda }$ is $K$%
-invariant, we can also view $\phi _{\lambda }$ as defined on $U/K$ in the
natural way.

If $f\in L^{2}(U/K)$, then we define 
\begin{equation*}
f\ast \phi _{\lambda }(\pi (x))=\int_{U}f(\pi (y))\phi _{\lambda
}(y^{-1}x)d\mu _{U}(y)\text{ for }x\in U\text{.}
\end{equation*}%
The Fourier series of $f$ is the formal sum 
\begin{equation*}
\sum_{\lambda \in \Lambda }d_{\lambda }f\ast \phi _{\lambda }.
\end{equation*}

A bounded linear operator $T:$ $L^{p}(U/K)$ $\rightarrow L^{p}(U/K)$ is
called a \textit{spherical multiplier} on $L^{p}(U/K)$ if there is a
function $\{m(\lambda )\}_{\lambda \in \Lambda }$ such that for $f\in
L^{2}\bigcap L^{p}(U/K)$, we have%
\begin{equation*}
T(f)=\sum_{\lambda \in \Lambda }d_{\lambda }m(\lambda )f\ast \phi _{\lambda
}.
\end{equation*}%
As before, we denote this operator by $T_{m}$ and also refer to the sequence 
$m$ as a \textit{spherical multiplier} on $L^{p}$. We write $\left\Vert
T_{m}\right\Vert _{p,p}$ or $\left\Vert m\right\Vert _{p,p}$ for its
operator norm.

Spherical multipliers are characterized by the property that they commute
with left translation by $U$.

We also view $m=\{m(\lambda )\}_{\lambda \in \Lambda }$ as being defined on
the \textquotedblleft integer-valued\textquotedblright\ elements of $%
\overline{\mathfrak{a}^{+}}$ and $\overline{\mathfrak{a}_{\ast }^{+}}$: If $%
Z\in \overline{\mathfrak{a}^{+}}$ has the form $Z=\sum n_{j}H_{j}$ with $%
n_{j}\in \mathbb{Z}^{+}$, we put 
\begin{equation*}
m(Z):=m(\lambda _{Z}).
\end{equation*}%
Similarly, if $Z=-\sum in_{j}H_{j}\in \overline{\mathfrak{a}_{\ast }^{+}}$
with $n_{j}\in \mathbb{Z}^{+}$, then we set $m(Z):=m(\lambda _{iZ})$.

For more details about symmetric spaces and their harmonic analysis we refer
the reader to \cite{H1} - \cite{OS} for example.

\section{\protect\bigskip Spherical multipliers on $U/K$ transfer to
multipliers on $\mathfrak{p}_{\ast }$}

\subsection{Notation}

We let $\exp :\mathfrak{u}\rightarrow U$ denote the exponential map. The
notation $\Pi _{1}$ will be used to denote the exponential map from $%
\mathfrak{p}_{\ast }$ to $U/K$, 
\begin{equation*}
\Pi _{1}(X)=\pi (\exp X)\text{ for }X\in \mathfrak{p}_{\ast }\text{.}
\end{equation*}%
More generally, for $t\geq 1$ we will let $\Pi _{t}:\mathfrak{p}_{\ast
}\rightarrow U/K$ be given by 
\begin{equation*}
\text{ }\Pi _{t}(X)=\Pi _{1}(X/t).
\end{equation*}

We will let $\Omega $ denote a convex neighbourhood of the identity in $%
\mathfrak{p}_{\ast }$ on which $\Pi _{1}$ is a diffeomorphism and let $J$ be
the Jacobian of $\Pi _{1}$,%
\begin{equation*}
J(Y)=\prod_{\alpha \in \Phi ^{+}}\frac{\sin \alpha (iH)}{\alpha (iH)}\text{
where }Y=Ad(k)H\text{, }H\in \mathfrak{a}_{\ast }\text{.}
\end{equation*}%
We will also assume that $\Omega $ is chosen suitably small that $J$ is
bounded away from $0$ and of course, $J$ is bounded by $1$.

We have the (change of variable) identity%
\begin{equation}
\int_{\Pi _{1}(\Omega )}f(\overline{x})d\mu _{U/K}(\overline{x}%
)=\int_{\Omega }f(\pi (\exp (Z))J(Z)d\mu _{\mathfrak{p}^{\ast
}}(Z)=\int_{\Omega }f(\Pi _{1}(Z))J(Z)d\mu _{\mathfrak{p}^{\ast }}(Z).
\label{CofV}
\end{equation}

Given $Z=\sum n_{j}H_{j}\in $ $\overline{\mathfrak{a}^{+}}$ and $t>0$, we
put 
\begin{equation}
\lbrack tZ]=\sum_{j}[tn_{j}]H_{j}  \label{tZ}
\end{equation}%
where $[tn_{j}]$ denotes the integer part of $tn_{j}$. As $[tn_{j}]\geq 0$, $%
[tZ]\in $ $\overline{\mathfrak{a}^{+}}$ and $\lambda _{\lbrack tZ]}=$ $\sum
[tn_{j}]\sigma _{j}$. We similarly understand $[itZ]$ when $Z\in \overline{%
\mathfrak{a}_{\ast }^{+}}$.

\subsection{A version of deLeeuw's theorem (2) for spherical multipliers}

First, we prove an analogue of the second part of deLeeuw's Theorem for the
pair $U/K$, $\mathfrak{p}_{\ast }$ that extends Stanton's Theorem 2.5 of 
\cite{St}.

\begin{theorem}
\label{2}Let $1<p<\infty $. Suppose $\{m_{t}\}_{t>0}$ is a family of
spherical multipliers on $L^{p}(U/K)$ with $\sup_{t}\left\Vert
m_{t}\right\Vert _{p,p}<\infty $. Assume we can define a continuous function 
$m$ on $\overline{\mathfrak{a}_{\ast }^{+}}$ by 
\begin{equation*}
m(Z)=\lim_{t\rightarrow \infty }m_{t}([itZ])\text{ for }Z\in \overline{%
\mathfrak{a}_{\ast }^{+}}.
\end{equation*}%
Then $m$ extends uniquely to a continuous $Ad(K)$-invariant function on $%
\mathfrak{p}_{\ast }$ and the linear map $T_{m}$ is a multiplier on $L^{p}(%
\mathfrak{p}_{\ast })$ satisfying $\left\Vert m\right\Vert _{p,p}\leq
C\sup_{t}\left\Vert m_{t}\right\Vert _{p,p},$ where $C$ is a constant
depending only on $p$.
\end{theorem}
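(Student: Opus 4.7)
I would adapt the Stanton/Dooley--Gaudry contraction strategy, pushing the question from $\mathfrak{p}_{\ast}$ up to $U/K$ via the family of dilated exponential maps $\Pi_{t}$. The first step is to extend $m$ from $\overline{\mathfrak{a}_{\ast}^{+}}$ to a continuous $Ad(K)$-invariant function on $\mathfrak{p}_{\ast}$: since $\overline{\mathfrak{a}_{\ast}^{+}}$ is a fundamental domain for $W$ acting on $\mathfrak{a}_{\ast}$, any continuous function on it extends uniquely to a $W$-invariant continuous function on $\mathfrak{a}_{\ast}$, and then to a unique $Ad(K)$-invariant continuous function on $\mathfrak{p}_{\ast}$. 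By a density argument it then suffices to prove $\Vert T_{m}f\Vert_{p}\le C\sup_{t}\Vert m_{t}\Vert_{p,p}\Vert f\Vert_{p}$ for $f$ in a dense class of compactly supported, smooth, $Ad(K)$-invariant functions on $\mathfrak{p}_{\ast}$.

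For such an $f$, and for all $t$ large enough that $t^{-1}\mathrm{supp}(f)\subset\Omega$, define the lifted function $f^{t}$ on $U/K$ by
\begin{equation*}
f^{t}(\Pi_{t}(X)) := f(X) \text{ for } X\in t\Omega,\qquad f^{t}:=0 \text{ off } \Pi_{1}(\Omega).
\end{equation*}
The change of variable (\ref{CofV}) and the substitution $X=tZ$ yield
\begin{equation*}
\Vert f^{t}\Vert_{L^{p}(U/K)}^{p} = t^{-D}\int_{\mathfrak{p}_{\ast}} |f(X)|^{p} J(X/t)\,d\mu_{\mathfrak{p}_{\ast}}(X),
\end{equation*}
so $t^{D/p}\Vert f^{t}\Vert_{L^{p}(U/K)}\to\Vert f\Vert_{L^{p}(\mathfrak{p}_{\ast})}$ as $t\to\infty$, since $J(X/t)\to 1$ uniformly on the support of $f$. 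Applying the hypothesis to $f^{t}$ and pulling the result back through (\ref{CofV}) (using that $J$ is bounded below on $\Omega$) gives, for every fixed compact set $\mathcal{K}\subset\mathfrak{p}_{\ast}$,
\begin{equation*}
\int_{\mathcal{K}} |T_{m_{t}}f^{t}(\Pi_{t}(X))|^{p}\,d\mu_{\mathfrak{p}_{\ast}}(X) \le (1+o(1))\sup_{t}\Vert m_{t}\Vert_{p,p}^{p}\,\Vert f\Vert_{L^{p}(\mathfrak{p}_{\ast})}^{p}.
\end{equation*}
Given the pointwise convergence $T_{m_{t}}f^{t}(\Pi_{t}(X))\to T_{m}f(X)$ for a.e.\ $X$, Fatou's lemma applied on $\mathcal{K}$ and the exhaustion of $\mathfrak{p}_{\ast}$ by such compact sets yields the required $L^{p}$ bound (with $C=1$).

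The main obstacle is establishing this pointwise convergence. The natural strategy is to compare the spherical expansion
\begin{equation*}
T_{m_{t}}f^{t}(\Pi_{t}(X)) = \sum_{\lambda\in\Lambda} d_{\lambda}\,m_{t}(\lambda)\,(f^{t}\ast\phi_{\lambda})(\Pi_{t}(X))
\end{equation*}
with the integral representation of $T_{m}f(X)$ coming from $Ad(K)$-invariance and (\ref{IntegForm}):
\begin{equation*}
T_{m}f(X) = \int_{\overline{\mathfrak{a}_{\ast}^{+}}} m(Z)\,\widehat{f}(Z)\,\psi_{Z}(X)\,|\pi(Z)|^{2}\,d\mu_{\mathfrak{a}_{\ast}}(Z),
\end{equation*}
where $\pi(Z):=\prod_{\alpha\in\Phi^{+}}\alpha(Z)$ and $\psi_{Z}(X):=\int_{K} e^{iB(Ad(k)Z,X)}\,d\mu_{K}(k)$ is the Bessel-type Euclidean spherical function. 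The matching rests on two standard contraction asymptotics: the uniform-on-compacta convergence $\phi_{\lambda_{[itZ]}}(\Pi_{t}(X))\to\psi_{Z}(X)$ (obtainable from the Harish-Chandra integral formula for $\phi_{\lambda}$), and the Weyl-dimension scaling $d_{\lambda_{[itZ]}}\asymp t^{D-r}|\pi(Z)|^{2}$, so that the discrete sum over the lattice points $\lambda_{[itZ]}$ becomes a Riemann approximation for the orbital integral on $\overline{\mathfrak{a}_{\ast}^{+}}$. The delicate technical point is interchanging the limit $t\to\infty$ with the infinite spherical sum; I would handle this by applying a sufficiently high power of the Laplace--Beltrami operator to $f^{t}$ (exploiting the smoothness of $f$) to obtain polynomial decay of $(f^{t}\ast\phi_{\lambda})(\Pi_{t}(X))$ in $|\lambda|$ uniform in $t$, producing a dominant tail estimate that justifies the passage to the limit.
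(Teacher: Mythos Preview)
Your contraction strategy via $\Pi_t$, the conversion of the spherical sum to an integral over $\mathfrak{a}_\ast^+$, and the asymptotics $\phi_{\lambda_{[itZ]}}\to\psi_Z$ together with the Weyl-dimension scaling are exactly the ingredients the paper uses (Lemma~\ref{Lemma} and formula~(\ref{Weyldim})). There is, however, one genuine gap and one substantive methodological difference worth flagging.

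\emph{The density claim is false.} Smooth, compactly supported, $Ad(K)$-invariant functions are \emph{not} dense in $L^p(\mathfrak{p}_\ast)$---only in its $Ad(K)$-invariant subspace---and for an $Ad(K)$-invariant multiplier the operator norm on all of $L^p(\mathfrak{p}_\ast)$ need not coincide with the norm on $K$-invariant functions. The paper sidesteps this by testing against the bilinear form $I=\int_{\mathfrak{p}_\ast} T_m f\cdot g$ for \emph{arbitrary} $f,g\in C_c^\infty(\mathfrak{p}_\ast)$, lifting both to $f_t,g_t$ on $U/K$, and proving $t^D I_t\to cI$ for an explicit constant $c$. Your pointwise--Fatou scheme can be repaired in the same spirit: simply drop the $Ad(K)$-invariance hypothesis on $f$ (the lift $f^t$, the norm asymptotics, and Lemma~\ref{Lemma} require nothing of the sort); only your displayed formula for $T_m f(X)$ must be replaced by the unsymmetrized inverse Fourier integral. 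Incidentally, the Weyl formula gives $d_{\lambda_{[itZ]}}\asymp t^{D-r}\prod_{\alpha\in\Phi^+}\alpha(iZ)$, a single factor $|\pi(Z)|$, so the weight in your integral should be $|\pi(Z)|$ rather than $|\pi(Z)|^2$.

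\emph{The limit/sum interchange.} Here you take a genuinely different route. The paper first imposes an artificial Gaussian decay $|m_t([itZ])|\le C_1 e^{-C_2\|Z\|^2}$, which makes dominated convergence immediate, and then removes it by multiplying $m_t$ by $n_{t,\varepsilon}(\lambda)=e^{-\varepsilon\|\lambda\|^2/t^2}$, invoking the H\"ormander--Mihlin theorem of \cite{We} to bound $\|n_{t,\varepsilon}\|_{p,p}$ uniformly, and letting $\varepsilon\to 0$; this is where the $p$-dependence of $C$ enters. Your proposal instead extracts the decay from $f$ via $\Delta_{U/K}$: one needs $\|\Delta^N f^t\|_{L^1(U/K)}\le C_f\, t^{2N-D}$ uniformly in $t$ (which does follow from the scaling $f^t(\Pi_1(Z))=f(tZ)$), so that $d_\lambda|f^t\ast\phi_\lambda|\le C\,t^{2N-D}|\lambda|^{D-r}c_\lambda^{-N}$ furnishes an integrable majorant in $Z$ once $2N>D$. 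This is viable and in principle yields a constant independent of $p$, but it requires care near $Z=0$ and the walls of $\overline{\mathfrak{a}_\ast^+}$, where $c_{\lambda_{[itZ]}}$ degenerates and one must fall back on the trivial bound $|f^t\ast\phi_\lambda|\le\|f^t\|_1$. The paper's regularization device is coarser but avoids these edge cases entirely.
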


For the proof we require the following Lemma that is a natural
generalization of \cite[Prop. 2.4]{St}. As it is technical, we will defer
its proof until after the conclusion of the proof of the Theorem.

\begin{lemma}
\label{Lemma}For $Z\in \mathfrak{a}_{\ast }^{+}$, $X,Y\in \mathfrak{p}_{\ast
}$, and $\lambda _{t}=\lambda _{\lbrack itZ]}$, we have%
\begin{equation*}
\lim_{t\rightarrow \infty }\phi _{\lambda _{t}}(\exp (Y/t)\exp
(X/t))=\int_{K}e^{iB(Z,Ad(k)(X+Y))}dk.
\end{equation*}
\end{lemma}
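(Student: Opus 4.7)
The plan is to reduce the two-exponential limit to the single-exponential asymptotic of Stanton \cite[Prop.~2.4]{St} via the Baker--Campbell--Hausdorff formula, controlling the BCH remainder by a uniform operator-norm estimate on the irreducible class~1 representation of highest weight $\lambda_{t}$.

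First, I apply Baker--Campbell--Hausdorff to write
\begin{equation*}
\exp(Y/t)\exp(X/t)=\exp(W_{t}),\qquad W_{t}=\frac{X+Y}{t}+\frac{[Y,X]}{2t^{2}}+O(t^{-3}),
\end{equation*}
so the remainder $S_{t}:=W_{t}-(X+Y)/t$ satisfies $|S_{t}|=O(t^{-2})$ in $\mathfrak{u}$. Using the matrix coefficient formula $\phi_{\lambda_{t}}(u)=\langle\rho(u)v_{\lambda_{t}},v_{\lambda_{t}}\rangle$ and the Duhamel identity $e^{A}-e^{B}=\int_{0}^{1}e^{sA}(A-B)e^{(1-s)B}\,ds$ for the skew-Hermitian operators $A=d\rho(W_{t})$, $B=d\rho((X+Y)/t)$ on the $\lambda_{t}$-isotypic subspace (on which $e^{sA}$ and $e^{(1-s)B}$ are unitary), I obtain
\begin{equation*}
|\phi_{\lambda_{t}}(\exp W_{t})-\phi_{\lambda_{t}}(\exp((X+Y)/t))|\leq\|d\rho(S_{t})\|_{\mathrm{op}}.
\end{equation*}
A standard weight estimate gives $\|d\rho(V)\|_{\mathrm{op}}\leq C\|\lambda_{t}\|\cdot|V|$ for $V\in\mathfrak{u}$, with $C$ depending only on $U$: one $Ad$-conjugates $V$ into the maximal-torus algebra, where $d\rho$ is diagonal in a weight basis with eigenvalues bounded in absolute value by $\|\lambda_{t}\|\cdot|V|$ since all weights of $\rho$ lie in the convex hull of the Weyl orbit of $\lambda_{t}$. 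Combined with $\|\lambda_{t}\|=O(t)$ and $|S_{t}|=O(t^{-2})$, the right-hand side is $O(t^{-1})\to0$.

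Finally, invoking \cite[Prop.~2.4]{St} on the single exponential with $V=X+Y\in\mathfrak{p}_{\ast}$ produces
\begin{equation*}
\lim_{t\to\infty}\phi_{\lambda_{t}}(\exp((X+Y)/t))=\int_{K}e^{iB(Z,Ad(k)(X+Y))}\,dk,
\end{equation*}
which together with the previous step proves the lemma. The main obstacle is securing the uniform operator-norm bound $\|d\rho(V)\|_{\mathrm{op}}\leq C\|\lambda\|\cdot|V|$ with $C$ independent of the sequence $\lambda_{t}$; without uniformity, the $O(t^{-2})$ BCH error cannot absorb the $\|\lambda_{t}\|=O(t)$ derivative growth of the representation, and the reduction to the single-exponential case breaks down. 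An alternative route to the same bound is to sum $\sum_{i}\|d\rho(X_{i})v_{\lambda_{t}}\|^{2}$ over an orthonormal basis of $\mathfrak{u}$ and use that the Casimir eigenvalue on the $\lambda_{t}$-subspace is $\|\lambda_{t}+\rho\|^{2}-\|\rho\|^{2}=O(\|\lambda_{t}\|^{2})$.
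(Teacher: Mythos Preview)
Your argument is correct, and it takes a genuinely different route from the paper. The paper works entirely inside the Harish--Chandra--type integral representation $\phi_{\lambda}(\exp X)=\int_{K}e^{\lambda(\mathcal{H}(\exp Ad(k)X))}\,dk$ from \cite[Lemma~2.2, Prop.~2.3]{St}: it feeds the BCH expansion $\exp(Y/t)\exp(X/t)=\exp((X+Y)/t+O(t^{-2}))$ directly into this formula, uses the first-order expansion $\mathcal{H}(\exp sZ)=s\mathcal{P}(Z)+O(s^{2})$ of the Iwasawa projection, splits $[itZ]=itZ-\{itZ\}$ to control the rounding, and passes to the limit under $\int_{K}$ by dominated convergence; the identification $B(Z,\mathcal{P}(Ad(k)V))=B(Z,Ad(k)V)$ is then quoted from the proof of \cite[Prop.~2.4]{St}. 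Your approach instead stays outside the integral formula: the Duhamel estimate together with the weight bound $\|d\rho(V)\|_{\mathrm{op}}\le \|\lambda_{t}\|\,|V|$ kills the BCH remainder at the level of matrix coefficients, reducing everything to the single-exponential asymptotic, which you then import wholesale from Stanton. Your method is cleaner and more modular---the perturbation step would apply verbatim to any $O(t^{-2})$ modification of the argument of $\exp$---whereas the paper's method is more self-contained, since it does not black-box Prop.~2.4 but rather reproves it in the two-exponential setting from the more primitive ingredients Lemma~2.2 and Prop.~2.3. One small point to watch: make sure Stanton's Prop.~2.4 is stated for sequences of dominant weights $\lambda_{t}$ with $\lambda_{t}/t\to iZ$ (so that it covers $\lambda_{[itZ]}$ and not only exact multiples); if not, the same Duhamel/weight bound with $|\,[itZ]-itZ\,|=O(1)$ closes that gap as well.
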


\begin{proof}[Proof of Theorem]
Throughout the proof, $c$ will denote a constant that may change.

First, assume that the family $\{m_{t}\}$ satisfies a decay condition,
namely, there are constants $C_{1},C_{2}$ such that 
\begin{equation}
\left\vert m_{t}([itZ])\right\vert \leq C_{1}\exp (-C_{2}\left\Vert
Z\right\Vert ^{2})  \label{decay}
\end{equation}
for all $Z\in \mathfrak{a}_{\ast }^{+}$ and large $t$.

The unique extension of $m$ to a continuous, $Ad(K)$-invariant function on $%
\mathfrak{p}_{\ast }$ is clear. Thus if we let $T=T_{m}$ denote the
corresponding linear operator, it will be enough to show that there is a
constant $C$ so that if%
\begin{equation*}
I:=\int_{\mathfrak{p}_{\ast }}Tf(X)g(X)dX,
\end{equation*}%
then 
\begin{equation*}
\left\vert I\right\vert \leq C\sup_{t}\left\Vert m_{t}\right\Vert
_{p,p}\left\Vert f\right\Vert _{L^{p}(\mathfrak{p}_{\ast })}\left\Vert
g\right\Vert _{L^{q}(\mathfrak{p}_{\ast })}
\end{equation*}%
whenever $f,g$ are $C^{\infty }$ functions on $\mathfrak{p}_{\ast }$ with
compact support and $q$ is the conjugate index to $p$.

Choose $t$ sufficiently large so that the set $(t^{-1}$supp $f)\bigcup
(t^{-1}$supp $g)$ is contained in $\Omega $ and hence $\Pi _{1}$ is a
diffeomorphism there. Define functions $f_{t}$ and $g_{t}$ on $U/K$ by 
\begin{equation*}
f_{t}(\pi (\exp (X))=f_{t}(\Pi _{1}(X))=f(tX)\text{ }
\end{equation*}%
and similarly for $g_{t}$. These are well defined because of the choice of $%
t $.

We will write $T_{t}$ for the spherical multiplier corresponding to $m_{t}$
and put 
\begin{equation*}
I_{t}=\int_{U/K}T_{t}(f_{t})(\overline{x})g_{t}(\overline{x})d\overline{x}.
\end{equation*}%
Obviously, we have%
\begin{equation*}
\left\vert I_{t}\right\vert \leq \sup_{t}\left\Vert m_{t}\right\Vert
_{p,p}\left\Vert f_{t}\right\Vert _{L^{p}(U/K)}\left\Vert g_{t}\right\Vert
_{L^{q}(U/K)}.
\end{equation*}

The first step is to calculate the $p$-norm of $f_{t}$. As $f_{t}$ is
supported on $\Pi _{1}(\Omega )$, the change of variables formula (\ref{CofV}%
) gives%
\begin{eqnarray*}
\left\Vert f_{t}\right\Vert _{L^{p}(U/K)}^{p} &=&\int_{U/K}|f_{t}(\overline{x%
})|^{p}d\overline{x}=\int_{\Pi _{1}(\Omega )}|f_{t}(\overline{x})|^{p}d%
\overline{x} \\
&=&\int_{\Omega }\left\vert f_{t}(\Pi _{1}(Y))\right\vert ^{p}J(Y)dY \\
&=&\int_{\mathfrak{p}_{\ast }}\left\vert f(tY)\right\vert
^{p}J(Y)dY=t^{-D}\int_{\mathfrak{p}_{\ast }}\left\vert f(Y)\right\vert
^{p}J(t^{-1}Y)dY.
\end{eqnarray*}%
As $|J(Y)|\leq 1$ for all $Y$, we see that $\left\Vert f_{t}\right\Vert
_{L^{p}(U/K)}\leq t^{-D/p}\left\Vert f\right\Vert _{L^{p}(\mathfrak{p}_{\ast
})}$.

Similarly, $\left\Vert g_{t}\right\Vert _{q}\leq t^{-D/q}\left\Vert
g\right\Vert _{q}$, so that 
\begin{equation*}
\left\vert I_{t}\right\vert \leq \sup_{t}\left\Vert m_{t}\right\Vert
_{p,p}t^{-D}\left\Vert f\right\Vert _{p}\left\Vert g\right\Vert _{q}.
\end{equation*}%
Thus it will be enough to prove that $\lim_{t\rightarrow \infty
}t^{D}I_{t}=cI$ for some constant $c$.

Now, 
\begin{equation*}
T_{t}(f_{t})=\sum_{\lambda \in \Lambda }d_{\lambda }m_{t}(\lambda )f_{t}\ast
\phi _{\lambda },
\end{equation*}%
therefore 
\begin{eqnarray*}
t^{D}I_{t} &=&t^{D}\int_{U/K}\sum_{\lambda \in \Lambda }m_{t}(\lambda
)d_{\lambda }f_{t}\ast \phi _{\lambda }(\overline{x})g_{t}(\overline{x})d%
\overline{x} \\
&=&t^{D}\int_{U}\int_{U}\sum_{\lambda \in \Lambda }d_{\lambda }m_{t}(\lambda
)f_{t}(\pi (y))\phi _{\lambda }(y^{-1}x)g_{t}(\pi (x))dydx.
\end{eqnarray*}%
For $t$ sufficiently large, change of variable arguments and the definitions
of $f_{t}$ and $g_{t}$ show that $t^{D}I_{t}$ equals%
\begin{equation*}
t^{-D}\int_{\mathfrak{p}_{\ast }}\int_{\mathfrak{p}_{\ast }}\sum_{\lambda
\in \Lambda }d_{\lambda }m_{t}(\lambda )f_{t}(\pi (\exp Y))\phi _{\lambda
}(\exp (-Y)\exp (X))g_{t}(\pi (\exp X))J(Y)J(X)d(Y)d(X)
\end{equation*}%
\begin{equation}
=t^{-D}\int_{\mathfrak{p}_{\ast }}\int_{\mathfrak{p}_{\ast }}\sum_{\lambda
\in \Lambda }d_{\lambda }m_{t}(\lambda )f(Y)\phi _{\lambda }(\exp (-Y/t)\exp
(X/t))g(X)J(Y/t)J(X/t)d(Y)d(X).  \label{F1}
\end{equation}

Recall that $\lambda \in \Lambda $ has the form $\lambda =\sum_{j=1}^{r\text{
}}n_{j}\sigma _{j}$ where $n_{j}\in \mathbb{Z}^{+}$, so that the sum over $%
\Lambda $ can be replaced by the sum over $\mathbb{Z}^{r+}$. This gives%
\begin{eqnarray*}
&&\sum_{\lambda \in \Lambda }d_{\lambda }m_{t}(\lambda )\phi _{\lambda
}(\exp (-Y/t)\exp (X/t)) \\
&=&\sum_{(n_{1},...,n_{r})\in \mathbb{Z}^{r+}}d_{\Sigma n_{j}\sigma
_{j}}m_{t}(\Sigma n_{j}\sigma _{j})\phi _{\Sigma n_{j}\sigma _{j}}(\exp
(-Y/t)\exp (X/t)) \\
&=&\sum_{\overrightarrow{n}\in \mathbb{Z}^{r+}}t^{r}\int_{\frac{n_{r}}{t}}^{%
\frac{n_{r}+1}{t}}\cdot \cdot \cdot \int_{\frac{n_{1}}{t}}^{\frac{n_{1}+1}{t}%
}m_{t}(\Sigma \lbrack tz_{j}]\sigma _{j})d_{\Sigma \lbrack tz_{j}]\sigma
_{j}}\phi _{\Sigma \lbrack tz_{j}]\sigma _{j}}(\exp (\frac{-Y}{t})\exp (%
\frac{X}{t}))dz_{1}...dz_{r} \\
&=&t^{r}\int_{\mathfrak{a}_{\ast }^{+}}m_{t}(\lambda _{\lbrack
itZ]})d_{\lambda _{\lbrack itZ]}}\phi _{\lambda _{\lbrack itZ]}}(\exp
(-Y/t)\exp (X/t))dZ
\end{eqnarray*}%
Combining this identity together with (\ref{F1}) and writing $\lambda _{t}$
for $\lambda _{\lbrack itZ]}$ gives%
\begin{equation*}
t^{D}I_{t}=t^{r-D}\int_{\mathfrak{p}_{\ast }}\int_{\mathfrak{p}_{\ast
}}\int_{\mathfrak{a}_{\ast }^{+}}m_{t}(\lambda _{t})d_{\lambda _{t}}\phi
_{\lambda _{t}}(\exp (\frac{-Y}{t})\exp (\frac{X}{t}))f(Y)g(X)J(\frac{Y}{t}%
)J(\frac{X}{t})dZdYdX.
\end{equation*}

The Weyl dimension formula states that%
\begin{equation*}
d_{\lambda _{\lbrack tiZ]}}=\prod_{\alpha \in \Sigma ^{+}}\frac{\left\langle
\alpha ,\lambda _{\lbrack itZ]}+\delta \right\rangle }{\left\langle \alpha
,\delta \right\rangle }
\end{equation*}%
where $\delta $ is half the sum of the positive roots. As $\lambda _{\lbrack
itZ]}$ is class 1, we have $\left\langle \alpha ,\lambda _{\lbrack
itZ]}\right\rangle =0$ if $\alpha \notin \Phi ^{+},$ thus writing $\{itZ\}$
for the `fractional' part of $itZ$ we have%
\begin{equation}
d_{\lambda _{\lbrack itZ]}}=t^{|\Phi ^{+}|}\prod_{\alpha \in \Phi ^{+}}\frac{%
\alpha (iZ-\{tiZ\}/t+H_{\delta }/t)}{\left\langle \alpha ,\delta
\right\rangle },  \label{Weyldim}
\end{equation}%
hence%
\begin{equation*}
\lim_{t\rightarrow \infty }\frac{d_{\lambda _{\lbrack tiZ]}}}{t^{|\Phi ^{+}|}%
}=\prod_{\alpha \in \Phi ^{+}}\frac{\alpha (iZ)}{\left\langle \alpha ,\delta
\right\rangle }.
\end{equation*}%
Moreover, $D=\dim \mathfrak{p}_{\ast }=\dim \mathfrak{a}$ $+|\Phi
^{+}|=r+|\Phi ^{+}|$, hence the Lemma implies that for $Z\in \mathfrak{a}%
_{\ast }^{+}$, 
\begin{eqnarray*}
&&\lim_{t\rightarrow \infty }t^{r-D}m_{t}(\lambda _{\lbrack itZ]})d_{\lambda
_{\lbrack itZ]}}\phi _{\lambda _{\lbrack itZ]}}(\exp (-Y/t)\exp
(X/t))J(Y/t)J(X/t) \\
&=&\prod_{\alpha \in \Phi ^{+}}\frac{\alpha (iZ)}{\left\langle \alpha
,\delta \right\rangle }m(Z)\int_{K}\exp \left( iB(Z,Ad(k)(X-Y)\right) dk%
\text{.}
\end{eqnarray*}

One can see from formula (\ref{Weyldim}) that there is some polynomial in
several variables, $P$, such that $t^{r-D}d_{\lambda _{\lbrack itZ]}}\leq
\left\vert P(Z)\right\vert $ for all $t$. Furthermore, $\left\vert \phi
_{\lambda }\right\vert $,$\left\vert J\right\vert \leq 1$, hence for $Z\in 
\mathfrak{a}_{\ast }^{+}$, 
\begin{equation*}
\left\vert t^{r-D}m_{t}(\lambda _{\lbrack itZ]})d_{\lambda _{t}}\phi
_{\lambda }(\exp (-Y/t)\exp (X/t))J(Y/t)J(X/t)\right\vert
\end{equation*}%
\begin{equation*}
\leq \left\vert P(Z)\right\vert C_{1}\exp (-C_{2}\left\Vert Z\right\Vert
^{2}),
\end{equation*}%
which is integrable over $\mathfrak{a}_{\ast }$. Since $f,g$ are continuous,
compactly supported functions, it follows from the Dominated convergence
theorem that 
\begin{equation}
t^{D}I_{t}\rightarrow \int_{\mathfrak{p}_{\ast }}\int_{\mathfrak{p}_{\ast
}}\int_{\mathfrak{a}_{\ast }^{+}}\prod_{\alpha \in \Phi ^{+}}\frac{\alpha
(iZ)}{\left\langle \alpha ,\delta \right\rangle }m(Z)%
\int_{K}e^{iB(Ad(k)Z,X-Y)}dkf(Y)g(X)dZdYdX.  \label{FormIt}
\end{equation}

Hence, it only remains to prove that the RHS of (\ref{FormIt}) is equal to $%
cI$ for some suitable constant $c$.

As $m$ is $Ad(K)$-invariant and $\alpha (iZ)\geq 0,$ the integration formula
(\ref{IntegForm}) implies%
\begin{equation*}
\int_{\mathfrak{a}_{\ast }^{+}}\int_{K}m(Z)e^{iB(Ad(k)Z,X-Y)}dk\prod_{\alpha
\in \Phi ^{+}}\alpha (iZ)dZ=\int_{\mathfrak{p}_{\ast }}m(Z)e^{iB(Z,X-Y)}dZ.
\end{equation*}%
Thus the RHS\ of (\ref{FormIt}) is equal to%
\begin{eqnarray}
&&c\int_{\mathfrak{p}_{\ast }}\int_{\mathfrak{p}_{\ast }}\int_{\mathfrak{p}%
_{\ast }}m(Z)e^{iB(Z,X-Y)}f(Y)g(X)dZdYdX  \notag \\
&=&c\int_{\mathfrak{p}_{\ast }}\int_{\mathfrak{p}_{\ast }}\int_{\mathfrak{p}%
_{\ast }}m(Z)e^{iB(Z,-Y)}f(Y)e^{iB(Z,X)}g(X)dYdZdX  \label{F2}
\end{eqnarray}%
where $c=\prod_{\alpha \in \Phi ^{+}}\left\langle \alpha ,\delta
\right\rangle ^{-1}$ and Fubini's theorem is justified by the exponential
decay in the function $m$. The Fourier transform and inversion formulas (see
(\ref{FT}), (\ref{Inversion})) simplify (\ref{F2}) to%
\begin{eqnarray*}
c\int_{\mathfrak{p}_{\ast }}\int_{\mathfrak{p}_{\ast }}m(Z)\widehat{f}%
(Z)e^{iB(Z,X)}g(X)dZdX &=&c\int_{\mathfrak{p}_{\ast }}\int_{\mathfrak{p}%
_{\ast }}\widehat{Tf}(Z)e^{iB(Z,X)}g(X)dZdX \\
&=&c\int_{\mathfrak{p}_{\ast }}Tf(X)g(X)dX.
\end{eqnarray*}%
As this equals $cI,$ the proof that $\left\Vert m\right\Vert _{p,p}\leq
C\sup_{t}\left\Vert m_{t}\right\Vert _{p,p}$ for a suitable constant $C$ is
complete under the additional decay assumption.

In the general case, for each $\varepsilon >0$ and $t$ large, let $%
n_{t,\varepsilon }(\lambda )=\exp (-\varepsilon \left\Vert \lambda
\right\Vert ^{2}/t^{2})$. The rapid decay of the function $z\rightarrow \exp
(-\delta \left\Vert z\right\Vert ^{2})$ for $z\in \mathbb{R}^{n}$ and any $%
\delta >0$, together with all its derivatives, allows one to use the
Hormander-Mihlin style central multiplier theorem for $L^{p}(U)$ (c.f., \cite%
{We}) to deduce that the functions $n_{t,\varepsilon }$ are $L^{p}$
spherical multipliers on $U/K$ and, furthermore, that their operator norms
are bounded by a constant $C_{1}$ that depends only on $p$.

It follows that the functions $m_{t,\varepsilon }(\lambda )=m_{t}(\lambda
)n_{t,\varepsilon }(\lambda )$ satisfy $\sup_{t}\left\Vert m_{t,\varepsilon
}\right\Vert _{p,p}\leq C_{1}\sup_{t}\left\Vert m_{t}\right\Vert _{p,p},$ as
well as the decay condition (\ref{decay}). By the first part of the proof, 
\begin{equation*}
m_{\varepsilon }(Z)=\lim_{t\rightarrow \infty }m_{t,\varepsilon }([itZ])
\end{equation*}%
is an $L^{p}(\mathfrak{p}_{\ast })$ multiplier with operator norm 
\begin{equation*}
\left\Vert m_{\varepsilon }\right\Vert _{p,p}\leq C\sup_{t}\left\Vert
m_{t,\varepsilon }\right\Vert _{p,p}\leq CC_{1}\sup_{t}\left\Vert
m_{t}\right\Vert _{p,p}.
\end{equation*}%
Letting $\varepsilon \rightarrow 0$, it follows that $m$ is also an $L^{p}(%
\mathfrak{p}_{\ast })$ multiplier with norm also bounded by $%
CC_{1}\sup_{t}\left\Vert m_{t}\right\Vert _{p,p}.$
\end{proof}

We turn now to proving Lemma \ref{Lemma}.

\begin{proof}[Proof of Lemma]
Put $\mathfrak{g}^{\mathbb{C}}=\mathfrak{k}^{\mathbb{C}}\oplus \mathfrak{a}^{%
\mathbb{C}}\oplus \mathfrak{n}^{\mathbb{C}}$ and let $\mathcal{P}$ denote
the projection onto $\mathfrak{a}^{\mathbb{C}}$. Let $G^{\mathbb{C}}$ be the
complexification of $U$ and denote by $G_{0}$ its subgroup with Lie algebra $%
\mathfrak{g}_{0}$. Then $G_{0}$ has Iwasawa decomposition $G_{0}=KAN$. Let $%
\mathcal{H}:G_{0}\rightarrow \mathfrak{a}$ be given by the rule $x=k\exp 
\mathcal{H}(x)n$ and continue it analytically to a neighbourhood of $e$ in $%
G^{\mathbb{C}}$. It is shown in \cite[Lemma 2.2, Prop. 2.3]{St} that for $s$
small and $Z$ in a suitable neighbourhood of $0$ in $\mathfrak{g}^{\mathbb{C}%
}$, 
\begin{equation*}
\mathcal{H}(\exp sZ)=s\mathcal{P}(Z)+O(s^{2})
\end{equation*}%
and also that if $X\in \mathfrak{g}^{\mathbb{C}}$ has sufficiently small
norm, then 
\begin{equation*}
\phi _{\lambda }(\exp X)=\int_{K}e^{\lambda (\mathcal{H}(\exp Ad(k)(X))}dk.
\end{equation*}

By the Hausdorff-Campbell formula, 
\begin{equation*}
\exp (-Y/t)\exp (X/t)=\exp ((X-Y)/t+W_{t}(X,Y))
\end{equation*}%
where $\left\Vert W_{t}(X,Y)\right\Vert \leq O(1/t^{2})$. Putting these
facts together and recalling that $\lambda _{t}=\lambda _{\lbrack itZ]},$ we
see that%
\begin{eqnarray*}
\phi _{\lambda _{t}}\left( \exp (\frac{-Y}{t})\exp (\frac{X}{t})\right) 
&=&\int_{K}\exp \left( \lambda _{t}(\mathcal{H}(\exp
Ad(k)(t^{-1}(X-Y)+W_{t}(X,Y)))\right) dk \\
&=&\int_{K}\exp \left( B([itZ],\mathcal{H}(\exp \frac{1}{t}%
Ad(k)(X-Y+tW_{t}(X,Y))))\right) dk \\
&=&\int_{K}\exp \left( B([itZ],\frac{1}{t}\mathcal{P}(Ad(k)(X-Y+tW_{t})+O(%
\frac{1}{t^{2}}))\right) dk.
\end{eqnarray*}

Writing $[itZ]=itZ-\{itZ\}$, this becomes%
\begin{equation*}
\int_{K}\exp \left( iB(Z,\mathcal{P}(Ad(k)(X-Y))+O(\frac{1}{t}))-B(\{itZ\},%
\frac{1}{t}\mathcal{P}(Ad(k)(X-Y))+O(\frac{1}{t^{2}}))\right) dk.
\end{equation*}%
As $\left\Vert \{itZ\}\right\Vert $ is bounded (over all $Z$ and $t)$, 
\begin{equation*}
\left\vert B\left( \{itZ\},\frac{1}{t}\mathcal{P}(Ad(k)(X-Y)+O(\frac{1}{t^{2}%
}))\right) \right\vert \leq O(\frac{1}{t})
\end{equation*}%
uniformly over $k\in K$, thus another application of the Dominated
convergence theorem implies that as $t\rightarrow \infty $%
\begin{equation*}
\phi _{\lambda _{\lbrack itZ]}}\left( \exp (\frac{-Y}{t})\exp (\frac{X}{t}%
)\right) \rightarrow \int_{K}e^{iB(Z,\mathcal{P}(Ad(k)(X-Y))}dk=%
\int_{K}e^{iB(Z,Ad(k)(X-Y))}dk,
\end{equation*}%
where the final equality was shown in the Proof of Prop 2.4 in \cite{St}.
\end{proof}

A special case of the theorem is \cite[Thm. 2.5]{St}.

\begin{corollary}
Suppose $m$ is an $Ad(K)$-invariant, continuous, bounded function on $%
\mathfrak{p}_{\ast }$. For $t>0,$ define $m_{t}(Z)=m(Z/t)$ for $Z\in 
\overline{\mathfrak{a}_{\ast }^{+}}$ and assume that $\{m_{t}\}_{t>0}$ is a
family of spherical multipliers on $L^{p}(U/K)$ with uniformly bounded
operator norms. Then $T_{m}$ is a multiplier on $L^{p}(\mathfrak{p}_{\ast })$%
.
\end{corollary}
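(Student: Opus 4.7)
The plan is to deduce the Corollary directly from Theorem \ref{2}. The uniform operator-norm bound $\sup_t\|m_t\|_{p,p}<\infty$ is part of the hypothesis, so the only real task is to identify the limit $\lim_{t\to\infty}m_t([itZ])$ on $\overline{\mathfrak{a}_\ast^+}$ and check that it produces the given function $m$.

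To unpack $m_t([itZ])$, I would let $Z=-i\sum_j n_jH_j\in\overline{\mathfrak{a}_\ast^+}$, so that $itZ=t\sum_j n_jH_j\in\overline{\mathfrak{a}^+}$ and, by (\ref{tZ}), $[itZ]=\sum_j[tn_j]H_j$. The two integer-lattice conventions from Section 2 are consistent in the sense that for $W=\sum_j N_jH_j$ with $N_j\in\mathbb{Z}^+$ one has $m_t(W)=m_t(\lambda_W)=m_t(-iW)$, with $-iW\in\overline{\mathfrak{a}_\ast^+}$. Applying this with $W=[itZ]$ and then invoking the definition $m_t(Y)=m(Y/t)$ for $Y\in\overline{\mathfrak{a}_\ast^+}$ gives
\[
m_t([itZ])=m_t(-i[itZ])=m\!\left(-i\sum_j\tfrac{[tn_j]}{t}H_j\right).
\]

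As $t\to\infty$, $[tn_j]/t\to n_j$ for each $j$, so the argument converges to $-i\sum_j n_jH_j=Z$, and continuity of $m$ on $\mathfrak{p}_\ast$ yields $m_t([itZ])\to m(Z)$. The pointwise limit on $\overline{\mathfrak{a}_\ast^+}$ is therefore continuous, and its unique $Ad(K)$-invariant extension to $\mathfrak{p}_\ast$ coincides with $m$ itself, since $m$ is already given as an $Ad(K)$-invariant continuous function on $\mathfrak{p}_\ast$. Theorem \ref{2} then applies and asserts that $T_m$ is an $L^p(\mathfrak{p}_\ast)$ multiplier, which is precisely the conclusion of the Corollary.

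There is no serious obstacle here: the entire argument is bookkeeping with the two integer-lattice parametrisations (weights in $\overline{\mathfrak{a}^+}$ and $-i$ times vectors in $\overline{\mathfrak{a}_\ast^+}$) plus a single appeal to continuity of $m$. The only mildly delicate point is verifying that the weight-lattice convention implicit in Theorem \ref{2} is compatible with the dilation definition $m_t(Z)=m(Z/t)$, which is exactly what the identity $m_t([itZ])=m(-i[itZ]/t)$ accomplishes.
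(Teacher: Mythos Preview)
Your proposal is correct and follows exactly the same approach as the paper: verify via continuity that $\lim_{t\to\infty} m_t([itZ])=m(Z)$ and then invoke Theorem~\ref{2}. You have simply unpacked the integer-lattice bookkeeping more explicitly than the paper's two-line proof, but the substance is identical.
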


\begin{proof}
It is enough to note that a continuity argument implies 
\begin{equation*}
\lim_{r\rightarrow \infty }m_{t}([tZ])=\lim_{t}m\left( [tZ]/t\right) =m(Z)%
\text{ for }Z\in \overline{\mathfrak{a}_{\ast }^{+}}.
\end{equation*}%
Then call upon the theorem.
\end{proof}

\section{Multipliers on $\mathfrak{p}_{\ast }$ transfer to spherical
multipliers on $U/K$}

In this section we prove an analogue of the first part of deLeeuw's theorem
that is a direct converse of the analogue of the second part that we proved
in the previous section, for a restricted class of multipliers.

We continue to assume that $\Omega $ is a convex neighbourhood of the
identity in $\mathfrak{p}_{\ast }$ on which $\Pi _{1}$ is a diffeomorphism
and that the Jacobian of $\Pi _{1},$ $J,$ is bounded away from zero. We fix
a convex, symmetric neighbourhood of the identity in $\mathfrak{p}_{\ast }$, 
$\mathcal{O}\subseteq \Omega ,$ that is relatively compact.

We remind the reader that the notation $[tZ]$ was defined in (\ref{tZ}).

\begin{theorem}
Let $1<p<\infty $. Assume $\xi $ $\in L^{1}(\mathfrak{p}_{\ast })$ is an $%
Ad(K)$-invariant function supported on the neighbourhood $\mathcal{O}$.
There are constants $C_{1},C_{2}>0$ and $L^{p}$ spherical multipliers on $U/K
$, $\{m_{t}(\lambda )\}_{\lambda \in \Lambda },$ such that 
\begin{equation*}
\lim_{t\rightarrow \infty }m_{t}(\lambda _{\lbrack itZ]})=\widehat{\xi }(Z)%
\text{ for all }Z\in \overline{\mathfrak{a}_{\ast }^{+}}.
\end{equation*}%
and%
\begin{equation*}
C_{1}\limsup_{t\rightarrow \infty }\left\Vert m_{t}\right\Vert _{p,p}\leq
\left\Vert \xi \right\Vert _{p,p}\leq C_{2}\limsup_{t\rightarrow \infty
}\left\Vert m_{t}\right\Vert _{p,p},
\end{equation*}%
where we view $\xi $ as a linear operator on $L^{p}$ with the action given
by convolution.
\end{theorem}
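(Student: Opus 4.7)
My plan is to construct the spherical multipliers $m_t$ explicitly as those defined by convolution against a rescaled push-forward of $\xi$ to $U/K$. For $t$ large enough that $t^{-1}\mathcal{O}\subseteq\Omega$, set
\begin{equation*}
\xi_t(\Pi_1(Y))=\frac{t^D\,\xi(tY)}{J(Y)}\quad\text{for }Y\in t^{-1}\mathcal{O},
\end{equation*}
and $\xi_t=0$ elsewhere. The $Ad(K)$-invariance of both $\xi$ and $J$ makes $\xi_t$ well-defined and bi-$K$-invariant on $U$, so convolution by $\xi_t$ is a spherical multiplier, whose multiplier sequence I call $m_t$.

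The first step is to verify the pointwise convergence. The standard spherical transform formula together with the change of variables (\ref{CofV}) give
\begin{equation*}
m_t(\lambda_{[itZ]})=\int_{\mathfrak{p}_\ast}\xi(Y)\,\phi_{\lambda_{[itZ]}}\bigl(\exp(-Y/t)\bigr)\,dY.
\end{equation*}
Since $|\phi_\lambda|\leq 1$ and $\xi\in L^1$, dominated convergence applies, and Lemma \ref{Lemma} together with the $Ad(K)$-invariance of $\widehat{\xi}$ yield $\lim_{t\to\infty}m_t(\lambda_{[itZ]})=\widehat{\xi}(Z)$ for all $Z\in\overline{\mathfrak{a}_\ast^+}$. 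As $\widehat{\xi}$ is continuous and $Ad(K)$-invariant, the upper bound $\|\xi\|_{p,p}\leq C_2\sup_t\|m_t\|_{p,p}$ then follows immediately from Theorem \ref{2} applied to this family: its conclusion is precisely that convolution by $\xi$ is bounded on $L^p(\mathfrak{p}_\ast)$ with the stated norm estimate.

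The main obstacle is the reverse inequality $C_1\limsup_t\|m_t\|_{p,p}\leq\|\xi\|_{p,p}$, which requires a genuine upper bound on the operator norm of $T_{m_t}$. The strategy is transference by dilation combined with a localization argument. For $g\in L^p(U/K)$ supported in $\Pi_t(\mathcal{O})$, write $\tilde g=g\circ\Pi_t$; expanding $T_{m_t}g$ in exponential coordinates $v=\exp(W/t)$ and invoking the Baker--Campbell--Hausdorff identity $\exp(X/t)\exp(-W/t)=\exp((X-W)/t+O(1/t^2))$, followed by a change of variable that absorbs the BCH remainder, yields a representation
\begin{equation*}
T_{m_t}g(\Pi_t(X))=\int_{\mathfrak{p}_\ast}\tilde g(X-V)\,\xi^{(X,t)}(V)\,dV,
\end{equation*}
where the $X$-dependent kernel $\xi^{(X,t)}$ satisfies $\sup_X\|\xi^{(X,t)}-\xi\|_{L^1}\to 0$ as $t\to\infty$ (from the uniform smallness of the BCH remainder, the $L^1$-continuity of translation, and $J(\cdot/t)\to 1$, with approximation of $\xi$ by continuous functions if necessary). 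A perturbation argument of Minkowski type then gives the localized estimate $\|T_{m_t}g\|_{L^p(U/K)}\leq(\|\xi\|_{p,p}+o(1))\|g\|_{L^p(U/K)}$, which by left-$U$-invariance of $T_{m_t}$ extends to $g$ supported in any translate $u\Pi_t(\mathcal{O})$. An arbitrary $g\in L^p(U/K)$ is then decomposed via a partition of unity $\{\eta_\alpha\}$ subordinate to a cover of $U/K$ by such translates, and the bounded-overlap estimates $\sum_\alpha\|g\eta_\alpha\|_p^p\leq\|g\|_p^p$ and $\|T_{m_t}g\|_p^p\leq N^{p-1}\sum_\alpha\|T_{m_t}(g\eta_\alpha)\|_p^p$ (from bounded overlap of the thickened cover plus H\"older) combine with the localized bound to yield $\|m_t\|_{p,p}\leq CN^{(p-1)/p}\|\xi\|_{p,p}$ for a $t$-independent constant $C$, completing the proof.
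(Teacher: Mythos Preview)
Your construction of $m_t$ as the spherical transform of the push-forward $\xi_t$ is natural, and the pointwise convergence $m_t(\lambda_{[itZ]})\to\widehat{\xi}(Z)$ together with the appeal to Theorem~\ref{2} for the inequality $\|\xi\|_{p,p}\le C_2\limsup_t\|m_t\|_{p,p}$ are correct. The difficulty lies in your localized bound $\|T_{m_t}g\|_p\le(\|\xi\|_{p,p}+o(1))\|g\|_p$. After the BCH manipulation and change of variable you arrive at an operator of the form
\[
B_t h(X)=\int h\bigl(X-V+R_t(X,V)\bigr)\,\xi(V)\,\kappa_t(X,V)\,dV,
\]
with $|R_t|,|\kappa_t-1|=O(1/t)$ uniformly on the relevant compact sets. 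A Minkowski argument bounds $\|B_t h\|_p$ by $(1+o(1))\|\xi\|_1\|h\|_p$, but it does \emph{not} give $\|\xi\|_{p,p}$: the presence of the $X$-dependent shift $R_t(X,V)$ means $B_t$ is not translation invariant, so the convolution norm $\|\xi\|_{p,p}$ is not available. One can try to compare $B_th$ with $h\ast\xi$, but the difference involves $\|h(\cdot+R_t(\cdot,V))-h\|_p$, and this quantity does \emph{not} tend to $0$ uniformly over $\|h\|_p\le1$ (take $h$ oscillating at frequency comparable to $t$). Hence the $o(1)$ in your localized estimate is not uniform in $g$, and the bound on the operator norm does not follow. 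The partition-of-unity step is fine, but it inherits the same defect.

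The paper avoids this obstacle by a different construction of the multipliers. Rather than pushing $\xi$ forward and convolving on $U/K$, it first defines an auxiliary (non-commuting) operator $V_t$ via $\langle V_tF,G\rangle_{U/K}=\langle F_t\ast\xi,G_t\rangle_{\mathfrak{p}_\ast}$, for which the bound $|\langle V_tF,G\rangle|\le c\|\xi\|_{p,p}\|F|_{\Pi_t(\mathcal{O})}\|_p\|G|_{\Pi_t(\mathcal{O}+\mathcal{O})}\|_q$ is immediate from H\"older. It then \emph{averages} $V_t$ over left translations, $\langle T_tF,G\rangle=t^D\int_U\langle V_t(\rho_yF),\rho_yG\rangle\,dy$, which simultaneously forces $T_t$ to commute with translations (hence be a spherical multiplier) and, by Fubini and H\"older over $U$, preserves the bound $\|T_t\|_{p,p}\le c\|\xi\|_{p,p}$. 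The multiplier sequence of this averaged operator (after normalisation) is then shown to satisfy the required pointwise limit. Your $m_t$ and the paper's agree asymptotically but differ for finite $t$; it is precisely this built-in averaging that supplies the $\|\xi\|_{p,p}$ bound which your direct push-forward does not obviously enjoy.
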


\begin{proof}
The approach we take to this proof is motivated by \cite{DGu2} and \cite{DGR}%
. Throughout the proof $c$ will denote a constant that may change. Without
loss of generality we will assume $t\geq 1$.

Let $q$ be the conjugate index to $p$. Given $F\in L^{p}(U/K)$ and $G\in
L^{q}(U/K),$ we define $F_{t}$ and $G_{t}$ on $\mathfrak{p}_{\ast }$ by%
\begin{equation*}
F_{t}(Z)=\left\{ 
\begin{array}{cc}
t^{-D/p}F(\Pi _{t}(Z)) & \text{if }Z\in \mathcal{O} \\ 
0 & \text{else}%
\end{array}%
\right.
\end{equation*}%
and 
\begin{equation*}
G_{t}(Z)=\left\{ 
\begin{array}{cc}
t^{-D/q}G(\Pi _{t}(Z)) & \text{if }Z\in \mathcal{O}+\mathcal{O} \\ 
0 & \text{else}%
\end{array}%
\right. .
\end{equation*}

Note that $\mathcal{O}/t\subseteq \mathcal{O}$, hence, with $c>0$ chosen
such that $J\geq 1/c$ on $\mathcal{O}$, we have%
\begin{eqnarray*}
\left\Vert F_{t}\right\Vert _{L^{p}(\mathfrak{p}_{\ast })}^{p}
&=&t^{-D}\int_{\mathcal{O}}\left\vert F(\Pi _{t}(X))\right\vert ^{p}dX \\
&\leq &ct^{-D}\int_{\mathcal{O}}\left\vert F(\Pi _{t}(X))\right\vert
^{p}J(t^{-1}X)dX.
\end{eqnarray*}%
Making the change of variable $X=tW$ and simplifying gives%
\begin{eqnarray*}
\left\Vert F_{t}\right\Vert _{L^{p}(\mathfrak{p}_{\ast })} &\leq &c\left(
\int_{\mathcal{O}/t}\left\vert F(\Pi _{1}(W))\right\vert ^{p}J(W)dW\right)
^{1/p} \\
&=&c\left( \int_{\Pi _{t}(\mathcal{O})}\left\vert F(\overline{x})\right\vert
^{p}d\overline{x}\right) ^{1/p}=c\left\Vert F|_{\Pi _{t}(\mathcal{O}%
)}\right\Vert _{L^{p}(U/K)}.
\end{eqnarray*}%
Similarly, we have 
\begin{equation*}
\left\Vert G_{t}\right\Vert _{L^{q}(\mathfrak{p}_{\ast })}\leq c\left\Vert
G|_{\Pi _{t}(\mathcal{O}+\mathcal{O})}\right\Vert _{L^{q}(U/K)}.
\end{equation*}

For $f\in L^{p}(\mathfrak{p}_{\ast })$ and $g\in L^{q}(\mathfrak{p}_{\ast })$%
, we will define the linear action 
\begin{equation*}
<f,g>_{\mathfrak{p}_{\ast }}=\int_{\mathfrak{p}_{\ast }}f(X)g(X)d\mu _{%
\mathfrak{p}_{\ast }}(X)
\end{equation*}%
and similarly for $F,G$ defined on $U/K$. The above computations show that 
\begin{eqnarray*}
\left\vert <F_{t}\ast \xi ,G_{t}>_{\mathfrak{p}_{\ast }}\right\vert  &\leq
&\left\Vert F_{t}\ast \xi \right\Vert _{L^{p}(\mathfrak{p}_{\ast
})}\left\Vert G_{t}\right\Vert _{L^{q}(\mathfrak{p}_{\ast })}\leq \left\Vert
\xi \right\Vert _{p,p}\left\Vert F_{t}\right\Vert _{p}\left\Vert
G_{t}\right\Vert _{q} \\
&\leq &c\left\Vert \xi \right\Vert _{p,p}\left\Vert F|_{\Pi _{t}(\mathcal{O}%
)}\right\Vert _{L^{p}(U/K)}\left\Vert G|_{\Pi _{t}(\mathcal{O}+\mathcal{O}%
)}\right\Vert _{L^{q}(U/K)}.
\end{eqnarray*}%
This proves we can define a bounded linear operator $V_{t}:$ $%
L^{p}(U/K)\rightarrow $ $L^{p}(U/K)$ by the rule that for each $F\in
L^{p}(U/K)$, the linear function 
\begin{equation*}
V_{t}(F):L^{q}(U/K)\rightarrow \mathbb{C}
\end{equation*}%
is given by 
\begin{equation*}
<V_{t}(F),G>_{U/K}\text{ }=\text{ }<F_{t}\ast \xi ,G_{t}>_{\mathfrak{p}%
_{\ast }}\text{ for all }G\in L^{q}(U/K).
\end{equation*}

As $V_{t}$ need not commute with left translation by elements of $U$, we
consider the linear map $T_{t}:$ $L^{p}(U/K)\rightarrow $ $L^{p}(U/K)$ given
by 
\begin{equation*}
<T_{t}F,G>_{U/K}\text{ }=t^{D}\int_{U}<V_{t}(\rho _{y}(F)),\rho
_{y}(G)>_{U/K}dy\text{ for }G\in L^{q}(U/K)
\end{equation*}%
where $\rho _{y}(F)(\overline{x})=F(y^{-1}\overline{x})=F(\pi (y^{-1}x))$
for any $y\in U$. We remark that $\rho _{yz}=\rho _{y}\rho _{z}$. An
application of Holder's inequality shows that for any $t$, 
\begin{eqnarray*}
&&\left\vert <T_{t}F,G>\right\vert \leq ct^{D}\int_{U}\left\Vert \xi
\right\Vert _{p,p}\left\Vert (\rho _{y}F)|_{\Pi _{t}(\mathcal{O}%
)}\right\Vert _{L^{p}(U/K)}\left\Vert (\rho _{y}G)|_{\Pi _{t}(\mathcal{O}+%
\mathcal{O})}\right\Vert _{L^{q}(U/K)}dy \\
&\leq &ct^{D}\left\Vert \xi \right\Vert _{p,p}\left( \int_{U}\left\Vert
(\rho _{y}F)|_{\Pi _{t}(\mathcal{O})}\right\Vert _{p}^{p}dy\right)
^{1/p}\left( \int_{U}\left\Vert (\rho _{y}G)|_{\Pi _{t}(\mathcal{O}+\mathcal{%
O})}\right\Vert _{q}^{q}dy\right) ^{1/q}.
\end{eqnarray*}%
Fubini's theorem gives 
\begin{eqnarray*}
\int_{U}\left\Vert (\rho _{y}F)|_{\Pi _{t}(\mathcal{O})}\right\Vert
_{p}^{p}dy &=&\int_{U/K}\int_{U}\chi _{\Pi _{t}(\mathcal{O})}(\overline{x}%
)\left\vert F\circ \pi (y^{-1}x)\right\vert ^{p}dyd\overline{x} \\
&=&\int_{U}\left( \int_{U}\left\vert F(\pi (y^{-1}x))\right\vert
^{p}dy\right) \chi _{\Pi _{t}(\mathcal{O})}(\pi (x))dx \\
&=&\int_{U}\left( \int_{U}\left\vert F(\pi (y^{-1}))\right\vert
^{p}dy\right) \chi _{\Pi _{t}(\mathcal{O})}(\pi (x))dx
\end{eqnarray*}%
\texttt{\ }after replacing $y$ by $xy$. Thus 
\begin{eqnarray*}
\int_{U}\left\Vert (\rho _{y}F)|_{\Pi _{t}(\mathcal{O})}\right\Vert
_{p}^{p}dy &=&\int_{U}\left( \left\Vert F\right\Vert
_{L^{p}(U/K)}^{p}\right) \chi _{\Pi _{t}(\mathcal{O})}(\pi (x))dx \\
&=&\mu _{U/K}(\Pi _{t}(\mathcal{O}))\left\Vert F\right\Vert
_{L^{p}(U/K)}^{p}.
\end{eqnarray*}%
\texttt{\ }Similarly, 
\begin{equation*}
\int_{U}\left\Vert \rho _{y}G|_{\Pi _{t}(\mathcal{O}+\mathcal{O}%
)}\right\Vert _{q}^{q}dy=\mu _{U/K}(\Pi _{t}(\mathcal{O}+\mathcal{O}%
))\left\Vert G\right\Vert _{L^{q}(U/K)}^{q}.
\end{equation*}

Since $\Pi _{t}(\mathcal{O})\subseteq \Pi _{1}(\Omega )\subseteq U/K,$ 
\begin{equation*}
\mu _{U/K}(\Pi _{t}(\mathcal{O}))=\int_{U/K}\chi _{\Pi _{t}(\mathcal{O})}(%
\overline{u})d\overline{u}=\int_{\Pi _{1}(\Omega )}\chi _{\Pi _{t}(\mathcal{O%
})}(\overline{u})d\overline{u}.
\end{equation*}%
More change of variables arguments and the fact that $\Pi _{1}$ is a
diffeomorphism on $\Omega $ means that 
\begin{eqnarray*}
\mu _{U/K}(\Pi _{t}(\mathcal{O})) &=&\int_{\Omega }\chi _{\Pi _{t}(\mathcal{O%
})}(\Pi _{1}(Z))J(Z)\,dZ \\
&=&t^{-D}\int_{t\Omega }\chi _{\Pi _{t}(\mathcal{O})}(\Pi
_{t}(Z))J(t^{-1}Z)dZ\leq t^{-D}\mu _{\mathfrak{p}_{\ast }}(\mathcal{O})
\end{eqnarray*}%
and this is finite as $\mathcal{O}$ is pre-compact. Similarly, 
\begin{equation*}
\mu _{U/K}(\Pi _{t}(\mathcal{O}+\mathcal{O}))\leq t^{-D}\mu _{\mathfrak{p}%
_{\ast }}(\mathcal{O}+\mathcal{O})<\infty .
\end{equation*}%
Thus%
\begin{eqnarray*}
&<&T_{t}F,G>_{U/K} \\
&\leq &ct^{D}\left\Vert \xi \right\Vert _{p,p}\left( t^{-D}\mu (\Pi _{t}(%
\mathcal{O}))\right) ^{1/p}\left\Vert F\right\Vert _{p}\left( t^{-D}\mu (\Pi
_{t}(\mathcal{O}+\mathcal{O}))\right) ^{1/q}\left\Vert G\right\Vert _{q} \\
&\leq &c\left\Vert \xi \right\Vert _{p,p}\left\Vert F\right\Vert
_{p}\left\Vert G\right\Vert _{q}
\end{eqnarray*}%
(for a different constant on the third line) and therefore the Riesz
Representation theorem implies%
\begin{equation*}
\left\Vert T_{t}F\right\Vert _{L^{p}(U/K)}\leq c\left\Vert \xi \right\Vert
_{p,p}\left\Vert F\right\Vert _{L^{p}(U/K)}.
\end{equation*}

Next, we check that $T_{t}$ commutes with translation on $U$. Indeed,
suppose $F,G$ are continuous functions on $U/K$. For each $v\in U$,%
\begin{eqnarray*}
&<&T_{t}(\rho _{v}F),G>_{U/K}\text{ }=t^{D}\int_{U}<V_{t}\rho _{y}(\rho
_{v}(F)),\rho _{y}(G)>dy \\
&=&t^{D}\int_{U}<V_{t}\rho _{y}(F),\rho _{yv^{-1}}(G)>dy \\
&=&<T_{t}F,\rho _{v^{-1}}G>_{U/K}\text{ }=\text{ }<\rho _{v}T_{t}F,G>_{U/K},
\end{eqnarray*}%
so $T_{t}(\rho _{v}F)=\rho _{v}(T_{t}F)$ for all $F\in C(U/K).$ A denseness
argument implies $T_{t}$ commutes with $\rho _{v}$ for all $v\in U$.

These facts establish that for each $t$, $T_{t}$ is a spherical multiplier
satisfying $\left\Vert T_{t}\right\Vert _{p,p}\leq c\left\Vert \xi
\right\Vert _{p,p}$.

Now define a multiplier $S_{t}$ by $S_{t}(F)=\frac{1}{\mu (\mathcal{O)}}%
T_{t}(F)$; of course $\left\Vert S_{t}\right\Vert _{p,p}\leq c\left\Vert \xi
\right\Vert _{p,p}$ for a new constant $c$. Let $m_{t}$ be the associated
multiplier sequence, i.e., $S_{t}=S_{m_{t}}$.\texttt{\ }

We will next prove that $\lim_{t\rightarrow \infty }m_{t}(\lambda _{\lbrack
itZ]})=\widehat{\xi }(Z)$. The final statement of the theorem, $\left\Vert
\xi \right\Vert _{p,p}\leq C_{2}\limsup_{t\rightarrow \infty }\left\Vert
m_{t}\right\Vert _{p,p},$ will then follow from Theorem \ref{2}. To do this,
we will find a different formulation of $T_{t}$. Applying the definitions of 
$T_{t},V_{t}$ and $\Pi _{t}$ gives%
\begin{eqnarray*}
&<&T_{t}F,G>_{U/K}\text{ }=t^{D}\int_{U}<V_{t}\rho _{y}(F),\rho
_{y}(G)>_{U/K}dy \\
&=&t^{D}\int_{U}<(\rho _{y}F)_{t}\ast \xi ,(\rho _{y}G)_{t}>_{\mathfrak{p}%
_{\ast }}dy=t^{D}\int_{U}\int_{\mathfrak{p}_{\ast }}\left( (\rho
_{y}F)_{t}\ast \xi \right) (X)(\rho _{y}G)_{t}(X)dXdy \\
&=&t^{D}\int_{U}\int_{\mathfrak{p}_{\ast }}\int_{\mathfrak{p}_{\ast }}(\rho
_{y}F)_{t}(W)\xi (-W+X)(\rho _{y}G)_{t}(X)dWdXdy \\
&=&\int_{U}\int_{\mathcal{O}+\mathcal{O}}\int_{\mathcal{O}}(\rho _{y}F)(\Pi
_{t}(W))\xi (-W+X)(\rho _{y}G)(\Pi _{t}(X))dWdXdy \\
&=&\int_{U}\int_{\mathcal{O}+\mathcal{O}}\int_{\mathcal{O}}F(\pi (y^{-1}\exp
W/t))\xi (-W+X)G(\pi (y^{-1}\exp X/t))dWdXdy.
\end{eqnarray*}%
After doing the change of variable $y\rightarrow (\exp X/t)y$ and inversion (%
$y\rightarrow y^{-1})$ we obtain%
\begin{equation*}
<T_{t}F,G>=t^{D}\int_{U}\int_{\mathcal{O}+\mathcal{O}}\int_{\mathcal{O}%
}F(\pi (y\exp (\frac{-X}{t})\exp (\frac{W}{t}))\xi (-W+X)G(\pi (y))dWdXdy.
\end{equation*}%
As $F$ is continuous, this proves that 
\begin{equation*}
T_{t}F(\overline{y})=\int_{\mathcal{O}+\mathcal{O}}\int_{\mathcal{O}}F(\pi
(y\exp (-X/t)\exp (W/t))\xi (-W+X)dWdX.
\end{equation*}

Changing the order of integration, noting that for a given $W\in \mathcal{O}$
the integral over the variable $X$ is limited to $X\in W+\mathcal{O},$ and
then doing the change of variable $X\rightarrow X+W$ gives%
\begin{equation}
T_{t}F(\overline{y})=\int_{\mathcal{O}}\int_{\mathcal{O}}F(\pi (y\exp
(-(X+W)/t)\exp (W/t))\xi (X)dXdW.  \label{TrFormula}
\end{equation}

Recall the Fourier series formula%
\begin{equation*}
S_{t}F=\sum_{\lambda \in \Lambda }d_{\lambda }m_{t}(\lambda )F\ast \phi
_{\lambda }.
\end{equation*}%
\texttt{\ }For $Z\in \mathfrak{a}_{\ast }^{+}$, $\lambda _{t}=\lambda
_{\lbrack itZ]}$ and $F=\phi _{\lambda _{t}}$ we have 
\begin{equation*}
S_{t}\phi _{\lambda _{t}}(e)=\sum_{\lambda \in \Lambda }d_{\lambda
}m_{t}(\lambda )\phi _{\lambda _{t}}\ast \phi _{\lambda }(e).
\end{equation*}%
As $\phi _{\lambda }\ast \phi _{\sigma }(e)=0$ if $\lambda \neq \sigma $ and
equals $1/d_{\lambda }$ else, it follows that $S_{t}\phi _{\lambda
_{t}}(e)=m_{t}(\lambda _{\lbrack itZ]})$.

According to Lemma \ref{Lemma}, 
\begin{equation*}
\lim_{t\rightarrow \infty }\phi _{\lambda _{t}}(\exp (-(X+W)/t)\exp (W/t))=%
\mathcal{J}(Z,-X).
\end{equation*}%
where 
\begin{equation*}
\mathcal{J}(Z,X)=\int_{K}e^{iB(Z,Ad(k)X)}dk.
\end{equation*}%
Since $\left\vert \phi _{\lambda }\right\vert \leq 1$ and $\mathcal{O}$ is
relatively compact, this identity, together with (\ref{TrFormula}) and the
fact that $\xi $ is supported on $\mathcal{O}$ implies%
\begin{equation*}
\lim_{t\rightarrow \infty }S_{t}\phi _{\lambda _{t}}(e)=\frac{1}{\mu _{%
\mathfrak{p}_{\ast }}(\mathcal{O})}\int_{\mathcal{O}}\int_{\mathcal{O}}%
\mathcal{J}(Z,-X)\xi (X)dWdX=\int_{\mathfrak{p}_{\ast }}\mathcal{J}(Z,-X)\xi
(X)dX.
\end{equation*}%
Apply the integration formula (\ref{IntegForm}), to get 
\begin{equation*}
\lim_{t\rightarrow \infty }S_{t}\phi _{\lambda _{t}}(e)=\int_{\mathfrak{a}%
_{\ast }^{+}}\int_{K}\xi (Ad(k)H)\mathcal{J}(Z,-Ad(k)H)\left\vert
\prod_{\alpha \in \Phi ^{+}}\alpha (H)\right\vert dkdH.
\end{equation*}%
Finally, using $K$-invariance and symmetry properties, we obtain%
\begin{equation*}
\lim_{t\rightarrow \infty }m_{t}(\lambda _{\lbrack itZ]})=\lim_{t\rightarrow
\infty }S_{t}\phi _{\lambda _{t}}(e)=\int_{\mathfrak{a}_{\ast }^{+}}\xi (H)%
\mathcal{J}(-H,Z)\left\vert \prod_{\alpha \in \Phi ^{+}}\alpha
(H)\right\vert dH
\end{equation*}%
\begin{eqnarray*}
&=&\int_{\mathfrak{a}_{\ast }^{+}}\int_{K}\xi
(Ad(k)H)e^{-iB(Z,Ad(k)H)}dk\left\vert \prod_{\alpha \in \Phi ^{+}}\alpha
(H)\right\vert dkdH \\
&=&\int_{\mathfrak{p}_{\ast }}e^{-iB(Y,Z)}\xi (Y)dY=\widehat{\xi }(Z)\text{ }
\end{eqnarray*}%
for $Z\in \overline{\mathfrak{a}_{\ast }^{+}},$ as we desired to show.
\end{proof}

\end{document}